\theoremstyle{plain}
\newtheorem{thm}{Theorem}[section]
\newtheorem{lem}[thm]{Lemma}
\newtheorem{cor}[thm]{Corollary}
\newtheorem{conj}[thm]{Conjecture}
\theoremstyle{definition}
\newtheorem{dfn}[thm]{Definition}
\newtheorem{exmps}[thm]{Examples}
\newtheorem{rem}[thm]{Remark}
\newtheorem{dfns-rems}[thm]{Definitions and Remarks}
\newtheorem{notas-rems}[thm]{Notations and Remarks}
\newtheorem{exmps-rems}[thm]{Examples and Remarks}
\begin{document}


\title[Depth, sdepth and regularity]{Depth, Stanley depth and regularity of ideals associated to graphs}


\author[S. A. Seyed Fakhari]{S. A. Seyed Fakhari}

\address{S. A. Seyed Fakhari, School of Mathematics, Statistics and Computer Science,
College of Science, University of Tehran, Tehran, Iran.}

\email{fakhari@khayam.ut.ac.ir}

\urladdr{http://math.ipm.ac.ir/$\sim$fakhari/}


\begin{abstract}
Let $\mathbb{K}$ be a field and $S=\mathbb{K}[x_1,\dots,x_n]$ be the
polynomial ring in $n$ variables over $\mathbb{K}$. Let $G$ be a
graph with $n$ vertices. Assume that $I=I(G)$ is the edge ideal of $G$ and $J=J(G)$ is its cover ideal. We prove that ${\rm sdepth}(J)\geq n-\nu_{o}(G)$ and ${\rm sdepth}(S/J)\geq n-\nu_{o}(G)-1$, where $\nu_{o}(G)$ is the ordered matching number of $G$. We also prove the inequalities ${\rm sdepth}(J^k)\geq {\rm depth}(J^k)$ and ${\rm sdepth}(S/J^k)\geq {\rm depth}(S/J^k)$, for every integer $k\gg 0$, when $G$ is a bipartite graph. Moreover, we provide an elementary proof for the known inequality ${\rm reg}(S/I)\leq \nu_{o}(G)$.
\end{abstract}


\subjclass[2000]{Primary: 13C15, 05E99; Secondary: 13C13}


\keywords{Cover ideal, Edge ideal, Ordered matching, Regularity, Stanley depth, Stanley's inequality}


\thanks{}


\maketitle


\section{Introduction and Preliminaries} \label{sec1}

Let $\mathbb{K}$ be a field and let $S=\mathbb{K}[x_1,\dots,x_n]$
be the polynomial ring in $n$ variables over $\mathbb{K}$. Let
$M$ be a finitely generated $\mathbb{Z}^n$-graded $S$-module. Let
$u\in M$ be a homogeneous element and $Z\subseteq
\{x_1,\dots,x_n\}$. The $\mathbb {K}$-subspace $u\mathbb{K}[Z]$
generated by all elements $uv$ with $v\in \mathbb{K}[Z]$ is
called a {\it Stanley space} of dimension $|Z|$, if it is a free
$\mathbb{K}[Z]$-module. Here, as usual, $|Z|$ denotes the number
of elements of $Z$. A decomposition $\mathcal{D}$ of $M$ as a
finite direct sum of Stanley spaces is called a {\it Stanley
decomposition} of $M$. The minimum dimension of a Stanley space
in $\mathcal{D}$ is called the {\it Stanley depth} of
$\mathcal{D}$ and is denoted by ${\rm sdepth} (\mathcal {D})$.
The quantity $${\rm sdepth}(M):=\max\big\{{\rm sdepth}
(\mathcal{D})\mid \mathcal{D}\ {\rm is\ a\ Stanley\
decomposition\ of}\ M\big\}$$ is called the {\it Stanley depth}
of $M$. We say that a $\mathbb{Z}^n$-graded $S$-module $M$ satisfies {\it Stanley's inequality} if $${\rm depth}(M) \leq
{\rm sdepth}(M).$$ In fact, Stanley \cite{s} conjectured that every $\mathbb{Z}^n$-graded $S$-module satisfies Stanley's inequality.
This conjecture has been recently disproved in \cite{abcj}.
However, it is still interesting to find the classes of
$\mathbb{Z}^n$-graded $S$-modules which satisfy Stanley's inequality.
For a reader friendly introduction to Stanley depth, we refer to
\cite{psty} and for a nice survey on this topic, we refer to
\cite{h}.

Let $G$ be a graph with vertex set $V(G)=\big\{x_1, \ldots,
x_n\big\}$ and edge set $E(G)$ (by abusing the notation, we identify the vertices of $G$ with the variables of $S$). For a vertex $x_i$, the {\it neighbor set} of $x_i$ is $N_G(x_i)=\{x_j\mid x_ix_j\in E(G)\}$ and We set $N_G[x_i]=N_G(x_i)\cup \{x_i\}$ and call it the {\it closed neighborhood} of $x_i$. For every subset $A\subset V(G)$, the graph $G\setminus A$ is the graph with vertex set $V(G\setminus A)=V(G)\setminus A$ and edge set $E(G\setminus A)=\{e\in E(G)\mid e\cap A=\emptyset\}$. A {\it bipartite graph} is one whose vertex set is partitioned into two (not necessarily nonempty) disjoint subsets in such a way that the two end vertices for each edge lie in distinct partitions. A {\it matching} in a graph is a set of edges such that no two different edges share a common vertex. A subset $W$ of $V(G)$ is called an {\it independent subset} of $G$ if there are no edges among the vertices of $W$. A subset $C$ of $V(G)$ is called a {\it vertex cover} of the graph $G$ if every edge of $G$ is incident to at least one vertex of $C$. A vertex cover $C$ is called a {\it minimal vertex cover} of $G$ if no proper subset of $C$ is a vertex cover of $G$.

Next, we define the notion of ordered matching for a graph. It was introduced in \cite{cv} and plays a central role in this paper.

\begin{dfn} \label{om}
Let $G$ be a graph, and let $M=\{\{a_i,b_i\}\mid 1\leq i\leq r\}$ be a
nonempty matching of $G$. We say that $M$ is an {\it ordered matching} of
$G$ if the following hold:
\begin{itemize}
\item[(1)] $A:=\{a_1,\ldots,a_r\} \subseteq V(G)$ is a set of
    independent vertices of $G$; and

\item[(2)] $\{a_i, b_j\}\in E(G)$ implies that $i\leq j$.
\end{itemize}
The {\it ordered matching number} of $G$, denoted by $\nu_{o}(G)$, is
defined to be $$\nu_{o}(G)=\max\{|M|\mid M\subseteq E(G)\ {\rm is\ an\
ordered\ matching\ of} \ G\}.$$
\end{dfn}

The edge ideal $I(G)$ of $G$ is the ideal of $S$ generated by the squarefree  monomials  $x_ix_j$, where $\{x_i, x_j\}$ is an edge of $G$. The Alexander dual of the edge ideal of $G$ in $S$, i.e., the
ideal $$J(G)=I(G)^{\vee}=\bigcap_{\{x_i,x_j\}\in E(G)}(x_i,x_j),$$ is called the
{\it cover ideal} of $G$ in $S$. The reason for this name is due to the
well-known fact that the generators of $J(G)$ correspond to minimal vertex covers of $G$.

The main goal of This paper is to study the Stanley depth of cover ideals and their power. In Theorem \ref{cover}, we prove that for every graph $G$, the inequalities ${\rm sdepth}(J(G))\geq n-\nu_{o}(G)$ and ${\rm sdepth}(S/J(G))\geq
n-\nu_{o}(G)-1$ hold. In that theorem, we also prove that the same inequalities hold, if one replaces sdepth by depth. Then, in Corollary \ref{reg}, we conclude that for every graph $G$ we have ${\rm reg}(S/I)\leq \nu_{o}(G)$. This inequality was previously proved by Constantinescu and Varbaro \cite[Remark 4.8]{cv}. However, our proof is more elementary.

In Section \ref{sec3}, we consider the Stanley depth of powers of cover ideal of bipartite graphs. Let $G$ be a bipartite graph. In \cite[Corollary 3.6]{s3}, the author proved that the sequences $\{{\rm sdepth}(J(G)^k)\}_{k=1}^{\infty}$ and $\{{\rm sdepth}(S/J(G)^k)\}_{k=1}^{\infty}$ are non-increasing. Thus the both sequences are convergent. In Theorem \ref{main}, we provide lower bounds for the limit value of theses sequences. Indeed, we prove that for every bipartite graph $G$, we have$$\lim_{k\to\infty}{\rm sdepth}(J(G)^k)\geq n-\nu_{o}(G) \ \ \ {\rm and} \ \ \ \lim_{k\to\infty}{\rm sdepth}(S/J(G)^k)\geq n-\nu_{o}(G)-1.$$Then we conclude in Corollary \ref{sin} that $J(G)^k$ and $S/J(G)^k$ satisfy the Stanley's inequality, for every integer $k\gg 0$. Theorem \ref{main} also shows that a conjecture of the author is true for the powers of cover ideal of bipartite graphs (see Conjecture \ref{conje} and the paragraph after it).


\section{First Power} \label{sec2}

The first main result of this paper is Theorem \ref{cover}, which provides a lower bound for the depth and the Stanley depth of cover ideal of graphs. We first need the following three simple lemmas. The first one shows that the ordered matching number of a graph strictly decreases when we delete the closed neighborhood of a non-isolated vertex.

\begin{lem} \label{lom}
Let $G$ be a graph and $x$ be a non-isolated vertex of $G$. Then we have $\nu_{o}(G\setminus N_G[x])\leq \nu_{o}(G)-1$.
\end{lem}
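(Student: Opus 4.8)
The plan is to take a maximum ordered matching of $G' := G\setminus N_G[x]$ and enlarge it to an ordered matching of $G$ by adjoining one more edge incident to $x$, thereby showing $\nu_o(G) \geq \nu_o(G') + 1$. Since $x$ is non-isolated, it has a neighbor $y$ in $G$, so $\{x,y\} \in E(G)$; the edge I adjoin will be $\{x,y\}$, with $x$ playing the role of an ``$a$-vertex'' (an element of the independent set $A$ in Definition \ref{om}) and $y$ the corresponding ``$b$-vertex''.

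First I would fix a maximum ordered matching $M' = \{\{a_i,b_i\} \mid 1\le i\le r\}$ of $G'$, where $r = \nu_o(G')$, with associated independent set $A' = \{a_1,\dots,a_r\}$ satisfying conditions (1) and (2) of Definition \ref{om}. Note every vertex appearing in $M'$ lies in $V(G') = V(G)\setminus N_G[x]$, hence is neither $x$ nor a neighbor of $x$. Now set $a_{r+1} := x$, $b_{r+1} := y$, and consider $M := M' \cup \{\{x,y\}\}$ together with $A := A' \cup \{x\}$. I must check this is an ordered matching of $G$. It is a matching: $\{x,y\}$ shares no vertex with any edge of $M'$ because $x\notin V(G')$ and $y\notin V(G')$ (as $y\in N_G(x)$). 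For condition (1), $A$ is independent in $G$: $A'$ is independent in $G'$ and hence in $G$ (edges of $G$ with both ends in $V(G')$ are exactly edges of $G'$), and $x$ is not adjacent in $G$ to any $a_i\in A'$ since $a_i\notin N_G(x)$. For condition (2), suppose $\{a_s, b_t\}\in E(G)$ with $s,t \in \{1,\dots,r+1\}$; I need $s\le t$. If $s,t\le r$, this edge has both ends in $V(G')$, so it is an edge of $G'$ and condition (2) for $M'$ gives $s\le t$. If $t = r+1$, then $s\le r+1$ automatically. The case $s = r+1$ forces $a_s = x$ to be adjacent to $b_t$; but for $t\le r$, $b_t\in V(G')$ is not adjacent to $x$, a contradiction, so this case does not arise (except $t=r+1$, already handled). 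Hence condition (2) holds, $M$ is an ordered matching of $G$, and $\nu_o(G)\ge |M| = r+1 = \nu_o(G')+1$, which rearranges to the claimed inequality.

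I do not anticipate a serious obstacle here; the argument is essentially bookkeeping with the definition. The one point that requires a little care — and is the closest thing to a ``hard part'' — is verifying condition (2) in the direction involving the new vertex $x = a_{r+1}$: one must observe that $x$ cannot be the ``$a$-side'' of any edge pointing back to an old $b_t$ with $t\le r$, precisely because all old vertices were removed from the open neighborhood $N_G(x)$. This is exactly why the statement uses the \emph{closed} neighborhood $N_G[x]$ rather than just $N_G(x)$: deleting $x$ itself as well ensures $x\notin V(G')$, so no edge of $M'$ can involve $x$ either, keeping $M$ a genuine matching.
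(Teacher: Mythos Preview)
Your proof is correct and follows essentially the same approach as the paper: adjoin the edge $\{x,y\}$ (with $x$ on the $a$-side) to a maximum ordered matching of $G\setminus N_G[x]$, using that no vertex of that matching lies in $N_G[x]$ to verify both the matching condition and conditions (1) and (2). Your write-up is in fact more careful than the paper's in checking that $M$ is a matching and in separating out the cases for condition (2).
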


\begin{proof}
Assume that $\nu_{o}(G\setminus N_G[x])=t$ and let $M=\{\{a_i,b_i\}\mid 1\leq i\leq t\}$ be an ordered matching of $G\setminus N_G[x]$. Since $x$ is not isolated, we may choose a vertex $y\in N_G(x)$. Set $a_{t+1}=x$ and $b_{t+1}=y$. Then $\{a_1,\ldots,a_{t+1}\}$ is a set of  independent vertices of $G$, because $a_1, \ldots, a_t$ are vertices of $G\setminus N_G[x]$. By the same reason, $a_{t+1}$ is not adjacent to $b_1, \ldots, b_t$. This shows that $M\cup \{a_{t+1}, b_{t+1}\}$ is an ordered matching of $G$ and therefore, $\nu_{o}(G)\geq t+1$.
\end{proof}

The next Lemma shows that how the cover ideal of a graph $G$ and that of $G\setminus N_G[x]$ are related, when $x$ is an arbitrary vertex of $G$.

\begin{lem} \label{del}
Let $G$ ba a graph with vertex set $V(G)=\{x_1, \ldots, x_n\}$. Assume that $x\in V(G)$ is a vertex of $G$. Set $u=\prod_{x_i\in N_G(x)}x_i$ and $J'=J(G\setminus N_G[x])S$. Then $J(G)+(x)=uJ'+(x)$.
\end{lem}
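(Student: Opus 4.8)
The plan is to compare the two monomial ideals $J(G)+(x)$ and $uJ'+(x)$ by examining their minimal monomial generators modulo the variable $x$. Since both ideals contain $x$, it suffices to show that a monomial not divisible by $x$ lies in $J(G)$ if and only if it lies in $uJ'$. I will use the primary decomposition $J(G)=\bigcap_{\{x_i,x_j\}\in E(G)}(x_i,x_j)$, so a monomial $m$ lies in $J(G)$ precisely when $m$ is divisible by $x_i$ or $x_j$ for every edge $\{x_i,x_j\}\in E(G)$; equivalently, the set $W=\{x_i\in V(G)\mid x_i\nmid m\}$ is an independent set of $G$, i.e. $V(G)\setminus W$ is a vertex cover.

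First I would split the edges of $G$ into two types: those incident to a vertex of $N_G[x]$, and those entirely inside $G\setminus N_G[x]$. For a monomial $m$ with $x\nmid m$ that lies in $J(G)$, every edge $\{x,x_i\}$ with $x_i\in N_G(x)$ forces $x_i\mid m$ (since $x\nmid m$), so $u\mid m$; writing $m=um'$, one checks that $m'\in J(G\setminus N_G[x])S=J'$, because every edge of $G\setminus N_G[x]$ is an edge of $G$ and the variable it is covered by in $m$ cannot be one of the $x_i\in N_G(x)$ divided out (here one must be slightly careful, but since $u$ only involves variables in $N_G(x)$ and edges of $G\setminus N_G[x]$ avoid $N_G[x]$ entirely, the covering variable survives in $m'$). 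Conversely, if $m\in uJ'$ with $x\nmid m$, then $u\mid m$ handles every edge meeting $N_G(x)$ — including all edges at $x$ — and the factor in $J'$ handles the edges inside $G\setminus N_G[x]$; edges meeting $N_G(x)$ but not $x$ are covered by $u$ as well. So $m\in J(G)$.

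Having established the equality of generating sets of monomials not divisible by $x$, I would conclude $J(G)+(x)=uJ'+(x)$ as ideals. I expect the main obstacle to be the bookkeeping in the forward direction: verifying that after dividing a generator $m$ of $J(G)$ by $u$, the quotient $m'$ still covers every edge of $G\setminus N_G[x]$. The point is that an edge $e$ of $G\setminus N_G[x]$ has both endpoints outside $N_G[x]$, hence disjoint from the support of $u$, so whichever endpoint of $e$ divides $m$ also divides $m'=m/u$; this is exactly the condition for $m'\in J(G\setminus N_G[x])$. Once phrased in terms of vertex covers rather than monomial divisibility, this becomes transparent, so I would organize the whole argument around the dictionary between monomials in $J(G)$ and vertex covers of $G$.
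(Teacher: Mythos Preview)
Your proposal is correct and follows essentially the same approach as the paper: both arguments hinge on the observation that a vertex cover of $G$ not containing $x$ must contain all of $N_G(x)$, and that removing $N_G(x)$ from such a cover yields a vertex cover of $G\setminus N_G[x]$, with the converse being immediate. The paper phrases this directly in terms of vertex covers from the start, whereas you begin with monomial divisibility via the primary decomposition and then translate---but as you yourself note at the end, the cleanest formulation is the vertex-cover one, which is exactly what the paper does.
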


\begin{proof}
Let $C$ be a vertex cover of $G$ with $x\notin C$. Then $N_G(x)\subseteq C$ and $C\setminus N_G(x)$ is a vertex cover of $G\setminus N_G[x]$. This shows that $J(G)+(x)\subseteq uJ'+(x)$. For the converse inclusion, assume that $D$ is a vertex cover of $G\setminus N_G[x]$. Then $D\cup N_G(x)$ is a vertex cover of $G$. This shows that $uJ'+(x)\subseteq J(G)+(x)$ and completes the proof.
\end{proof}

The following lemma provides a combinatorial description for the colon of cover ideals.

\begin{lem} \label{colon}
Let $G$ ba a graph with vertex set $V(G)=\{x_1, \ldots, x_n\}$. Assume that $x\in V(G)$ is a vertex of $G$. Set $J'=J(G\setminus x)S$. Then $(J(G): x)=J'$.
\end{lem}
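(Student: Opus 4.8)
The plan is to give a purely combinatorial description of the monomials in the colon ideal $(J(G):x)$ and show that they generate $J(G\setminus x)S$. Since $J(G) = \bigcap_{\{x_i,x_j\}\in E(G)}(x_i,x_j)$ is a monomial ideal, the colon $(J(G):x)$ is again a monomial ideal, and it suffices to identify its monomial generators, or equivalently to show the two ideals have the same set of monomials. The key structural fact is that $J(G)$ is generated by the monomials $\prod_{x_i\in C}x_i$ as $C$ ranges over the minimal vertex covers of $G$, so I would work with vertex covers throughout.

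First I would prove the inclusion $J' \subseteq (J(G):x)$. Take a vertex cover $C$ of $G\setminus x$. Then $C\cup\{x\}$ is a vertex cover of $G$, because any edge of $G$ either avoids $x$ — and is then covered by $C$ — or is incident to $x$. Hence $x\cdot\prod_{x_i\in C}x_i = \prod_{x_i\in C\cup\{x\}}x_i \in J(G)$, which says exactly that $\prod_{x_i\in C}x_i \in (J(G):x)$. Since such monomials generate $J'$, this gives one inclusion.

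For the reverse inclusion $(J(G):x)\subseteq J'$, I would take a monomial $m$ with $xm\in J(G)$ and show $m\in J'$. Because $J(G)$ is squarefree and generated in the manner above, $xm$ being in $J(G)$ means $\operatorname{supp}(xm)$ contains some minimal vertex cover $C$ of $G$; thus $\operatorname{supp}(m)\supseteq C\setminus\{x\}$. Now I claim $C\setminus\{x\}$ is a vertex cover of $G\setminus x$: indeed, every edge of $G\setminus x$ is an edge of $G$ not incident to $x$, so it is covered by $C$ at a vertex different from $x$, i.e.\ by $C\setminus\{x\}$. Therefore $\prod_{x_i\in C\setminus\{x\}}x_i \in J'$, and since this monomial divides $m$, we get $m\in J'$. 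This completes both inclusions and hence the equality. I do not anticipate a serious obstacle here; the only point requiring a little care is the bookkeeping around whether $x\in C$ or not, and the observation that edges of $G\setminus x$ are precisely the edges of $G$ disjoint from $x$, which is immediate from the definition of $G\setminus A$ given in the preliminaries.
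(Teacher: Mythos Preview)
Your proof is correct and follows essentially the same route as the paper: both directions are established by the vertex-cover correspondences $C\mapsto C\setminus\{x\}$ (from covers of $G$ to covers of $G\setminus x$) and $D\mapsto D\cup\{x\}$ (in the other direction). You are simply more explicit than the paper about why these set-theoretic statements translate into the monomial inclusions, spelling out the support argument for the containment $(J(G):x)\subseteq J'$.
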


\begin{proof}
If $C$ is a vertex cover of $G$, then $C\setminus \{x\}$ is a vertex cover of $G\setminus x$. This shows that $(J(G): x)\subseteq J'$. On the other hand, if $D$ is a vertex cover of $G\setminus x$, then $D\cup \{x\}$ is a vertex cover of $G$. This shows that $J'\subseteq (J(G):x)$.
\end{proof}

We are now ready to prove the first main result of this paper. As we mentioned in introduction, the second part of this theorem is known by \cite[Remark 4.8]{cv}. But our argument is completely different and provides a simple proof for it.

\begin{thm} \label{cover}
Let $G$ be a graph and $J(G)$ be its cover ideal. Then

\begin{itemize}
\item[(i)] ${\rm sdepth}(J(G))\geq n-\nu_{o}(G)$ and ${\rm sdepth}(S/J(G))\geq
    n-\nu_{o}(G)-1$,\\[-0.3cm]
\item[(ii)] ${\rm depth}(S/J(G))\geq n-\nu_{o}(G)-1$.
\end{itemize}
\end{thm}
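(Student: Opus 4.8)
The plan is to prove all three inequalities simultaneously by induction on the number of vertices $n$, using the three lemmas as the engine. The base case ($G$ has no edges, or more generally when $\nu_o(G)=0$) is trivial: then $J(G)=S$ (empty intersection) or, handling the degenerate situation carefully, the bounds reduce to $\mathrm{sdepth}(S)\geq n$ and similar statements. For the inductive step I would pick a non-isolated vertex $x$ of $G$; isolated vertices are harmless since they only contribute a polynomial variable to everything and shift $n$ and the sdepth/depth bounds in lockstep while leaving $\nu_o$ unchanged, so one reduces to a graph with no isolated vertices.

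The heart of the argument is the short exact sequence
\begin{equation*}
0 \longrightarrow S/(J(G):x) \stackrel{\cdot x}{\longrightarrow} S/J(G) \longrightarrow S/(J(G)+(x)) \longrightarrow 0.
\end{equation*}
By Lemma \ref{colon}, $(J(G):x)=J(G\setminus x)S$, so the left term is $S/J(G\setminus x)S$; here $G\setminus x$ has $n-1$ vertices (as a graph on its own vertex set) but we are extending to $S$, so its depth/sdepth over $S$ is one more than over $\mathbb{K}[x_1,\dots,\hat{x},\dots,x_n]$. By induction, $\mathrm{sdepth}$ and $\mathrm{depth}$ of $S/J(G\setminus x)S$ are at least $(n-1)-\nu_o(G\setminus x)-1+1 = n-\nu_o(G\setminus x)-1 \geq n-\nu_o(G)-1$, using the obvious monotonicity $\nu_o(G\setminus x)\leq\nu_o(G)$. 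For the right term, Lemma \ref{del} gives $J(G)+(x)=uJ'+(x)$ with $u=\prod_{x_i\in N_G(x)}x_i$ and $J'=J(G\setminus N_G[x])S$; thus $S/(J(G)+(x)) \cong \bigl(S/uJ'\bigr)/(x)$, and since multiplication by the monomial $u$ is an isomorphism of $S$-modules $J' \to uJ'$ (hence $S/J' \to S/uJ'$ up to the monomial grading shift, which preserves sdepth and depth), we get that this module has the same depth and sdepth as $S/(J(G\setminus N_G[x])S + (x))$. The variables in $N_G[x]$ play no role in $J(G\setminus N_G[x])S$, so modding out by $(x)$ and accounting for the $|N_G(x)|$ free variables from $N_G(x)$ and the variable $x$ itself, induction applied to $G\setminus N_G[x]$ (on its own vertex set, of size $n - |N_G[x]|$) together with Lemma \ref{lom} yields a lower bound of $n - \nu_o(G\setminus N_G[x]) - 1 \geq n - \nu_o(G) $ for sdepth of the quotient $S/(uJ'+(x))$ and one less for its depth—wait, one must be careful here: the quotient $S/(J(G)+(x))$ should be compared using the sdepth bound for $S/J$-type modules, giving $\geq n - \nu_o(G)$ after restoring free variables, which is strictly better than needed.

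Now I would assemble the pieces. For $\mathrm{depth}(S/J(G))$, the standard depth lemma for short exact sequences gives $\mathrm{depth}(S/J(G)) \geq \min\{\mathrm{depth}(S/(J(G):x)), \mathrm{depth}(S/(J(G)+(x)))\}$, and both terms are $\geq n-\nu_o(G)-1$ by the above, proving (ii). For Stanley depth, I would invoke the analogous inequality for sdepth associated to such short exact sequences (Rauf's lemma: $\mathrm{sdepth}(S/J(G)) \geq \min\{\mathrm{sdepth}(S/(J(G):x)), \mathrm{sdepth}(S/(J(G)+(x)))\}$), giving the $S/J$ part of (i). Finally, for $\mathrm{sdepth}(J(G))$ itself, I would run a parallel short exact sequence $0 \to xJ' \to J(G) \to J(G)/xJ' \to 0$ or instead use the decomposition $J(G) = xJ(G\setminus x)S \oplus (\text{part not divisible by }x)$ coming from Lemma \ref{colon} together with Lemma \ref{del} to split $J(G)$ as a direct sum of (a shift of) $J(G\setminus x)S$ with a module isomorphic to $uJ'$, then apply induction to each summand; the sdepth of a direct sum is the minimum of the sdepths, so both pieces contributing $\geq n - \nu_o(G)$ finishes it. The main obstacle I anticipate is the careful bookkeeping of free variables and monomial-grading shifts when passing between $S$ and the smaller polynomial rings—ensuring that deleting $N_G[x]$ really does contribute exactly $|N_G[x]|$ to the sdepth/depth count via free variables—and confirming that the inductive hypothesis is applied to a genuinely smaller graph in every branch (which it is, since $x$ is non-isolated so $|N_G[x]| \geq 2$).
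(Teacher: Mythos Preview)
Your approach is essentially the paper's: induct via a non-isolated vertex $x$, split into the colon piece $(J(G):x)=J(G\setminus x)S$ (Lemma~\ref{colon}) and the piece $J(G)\cap S'=uJ(G\setminus N_G[x])S'$ not involving $x$ (Lemma~\ref{del}), then apply the induction hypothesis with Lemma~\ref{lom}. The paper inducts on the number of edges rather than vertices and uses the vector-space decomposition $S/J(G)=(S'/J'S')\oplus x(S/(J(G):x))$ directly for the Stanley-depth bounds instead of invoking Rauf's short-exact-sequence inequality, but these are cosmetic differences.

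One step is incorrectly justified, however. You write that multiplication by $u$ gives an isomorphism $J'\to uJ'$ and ``hence $S/J'\to S/uJ'$''. The first isomorphism is correct, but the second is not: multiplication by $u$ sends $S$ into $uS$, so it only identifies $S/J'$ with the submodule $uS/uJ'\subset S/uJ'$, and in general $S/J'$ and $S/uJ'$ have different Hilbert functions. The conclusion you want is nevertheless true. For Stanley depth one has ${\rm sdepth}(S/uJ')={\rm sdepth}(S/J')$ and ${\rm sdepth}(uJ')={\rm sdepth}(J')$ by Cimpoea\c{s}'s result \cite[Theorem~1.1]{c1}, which is exactly what the paper invokes at this point. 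For ordinary depth, the module isomorphism $uJ'\cong J'$ gives ${\rm pd}(uJ')={\rm pd}(J')$, hence ${\rm pd}(S/uJ')={\rm pd}(S/J')$ and the equality of depths follows from Auslander--Buchsbaum. With this correction your argument is complete and matches the paper's. (Your side remark that the $S/(J(G)+(x))$ piece gives a bound ``strictly better than needed'' is also off by one: after accounting for the lost variable $x$, the bound you obtain for this piece is exactly $n-\nu_o(G)-1$, not $n-\nu_o(G)$, but that is still sufficient.)
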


\begin{proof}
We prove (i) and (ii) simultaneously by induction on the number of edges of $G$. If $G$ has only one edge, then $\nu_{o}(G)=1$ and $J(G)$ is generated by two variables. Then ${\rm
depth}(S/J(G))=n-2$. Also, ${\rm sdepth}(S/J(G))=n-2$ by \cite[Theorem
1.1]{r} and ${\rm sdepth}(J(G))\geq n-1$ by \cite[Corollary 24]{h} and \cite[Lemma 3.6]{hvz}. Therefore, in
these cases, the inequalities in (i) and (ii) are trivial.

We now assume that $G$ has at least two edges. Note that, $G$ has at least one non-isolated vertex. Without loss of generality, we may assume that $x_1$ is a non-isolated vertex of $G$. Let $S'=\mathbb{K}[x_2, \ldots, x_n]$ be the polynomial ring obtained from $S$ by deleting the variable $x_1$ and consider the ideals $J'=J(G)\cap S'$ and
$J''=(J(G):x_1)$.

Now $J(G)=J'S'\oplus x_1J''S$ and $S/J(G)=(S'/J'S')\oplus
x_1(S/J''S)$ (as vector spaces) and therefore by definition of  the Stanley depth we have
\[
\begin{array}{rl}
{\rm sdepth}(J(G))\geq \min \{{\rm sdepth}_{S'}(J'S'), {\rm sdepth}_S(J'')\},
\end{array} \tag{1} \label{1}
\]
and
\[
\begin{array}{rl}
{\rm sdepth}(S/J(G))\geq \min \{{\rm sdepth}_{S'}(S'/J'S'), {\rm sdepth}_S(S/J'')\}.
\end{array} \tag{2} \label{2}
\]
On the other hand, by applying the depth lemma on the exact sequence
\[
\begin{array}{rl}
0\longrightarrow S/(J(G):x_1)\longrightarrow S/J(G)\longrightarrow S/(J(G), x_1)
\longrightarrow 0
\end{array}
\]
we conclude that
\[
\begin{array}{rl}
{\rm depth}(S/J(G))\geq \min \{{\rm depth}_{S'}(S'/J'S'), {\rm depth}_S(S/J'')\}.
\end{array} \tag{3} \label{3}
\]
Using Lemma \ref{colon}, it follows that $J''=J(G\setminus x_1)S$. Hence our induction hypothesis implies that $${\rm depth}_S(S/J'')={\rm depth}_{S'}(S'/J'')+1\geq n-1-\nu_{o}(G\setminus x_1)-1+1\geq n-\nu_{o}(G)-1.$$Also, it follows from \cite[Lemma 3.6]{hvz} that$${\rm sdepth}_S(S/J'')={\rm sdepth}_{S'}(S'/J'')+1\geq n-1-\nu_{o}(G\setminus x_1)-1+1\geq n-\nu_{o}(G)-1,$$ and $${\rm sdepth}_S(J'')={\rm sdepth}_{S'}(J'')+1\geq n-1-\nu_{o}(G\setminus x_1)+1\geq n-\nu_{o}(G).$$

On the other hand, it follows from Lemma \ref{del} that there exists a monomial $u\in S'$ such that $J'S'=uJ(G\setminus N_G[x_1])S'$. Since $uJ(G\setminus N_G[x_1])S'$ and $J(G\setminus N_G[x_1])S'$, (up to a shift) are isomorphic as graded $S'$-Modules, we conclude that ${\rm depth}_{S'}(J'S')={\rm depth}_{S'}(J(G\setminus N_G[x_1])S')$. On the other hand, it follows from \cite[Theorem 1.1]{c1} that ${\rm sdepth}_{S'}(J'S')={\rm sdepth}_{S'}(J(G\setminus N_G[x_1])S')$ and ${\rm sdepth}_{S'}(S'/J'S')={\rm sdepth}_{S'}(S'/J(G\setminus N_G[x_1])S')$. Therefore by \cite[Lemma 3.6]{hvz}, Lemma \ref{lom} and
the induction hypothesis we conclude that

$${\rm sdepth}_{S'}(J'S')={\rm sdepth}_{S'}(J(G\setminus N_G[x_1])S')\geq n-1-\nu_{o}(G\setminus N_G[x_1])\geq n-\nu_{o}(G),$$
and similarly ${\rm sdepth}_{S'}(S'/J'S')\geq n-\nu_{o}(G)-1$ and ${\rm
depth}_{S'}(S'/J'S')\geq n-\nu_{o}(G)-1$. Now the assertions follow by inequalities (\ref{1}), (\ref{2})
and (\ref{3}).
\end{proof}

Let $M$ be a finitely generated graded $S$-Module. The {\it Castelnuovo-Mumford regularity} (or simply, regularity) of $M$, denoted by ${\rm reg}(M)$, is defined as follows:
$${\rm reg}(M)=\max\{j-i|\ {\rm Tor}_i^S(\mathbb{K}, M)_j\neq0\}.$$
The regularity of a module is one of the most important homological invariants of it. Computing the regularity of edge ideals or finding bounds for it has been studied by a number of researchers (see for example \cite{dhs}, \cite{hv}, \cite{k}, \cite{n}, \cite{va}).

An immediate consequence of the second part of theorem \ref{cover} is the following corollary.

\begin{cor} \label{reg}
For every graph $G$, we have ${\rm reg}(S/I(G))\leq \nu_{o}(G)$.
\end{cor}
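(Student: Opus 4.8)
The plan is to deduce Corollary \ref{reg} from Theorem \ref{cover}(ii) via the Alexander duality theorem of Terai, which relates the regularity of a squarefree monomial ideal to the projective dimension of its Alexander dual. Recall that $J(G) = I(G)^{\vee}$ by definition, and Terai's formula states that for any squarefree monomial ideal $I$ one has
\[
{\rm reg}(S/I) = {\rm pd}(S/I^{\vee}) - 1 = {\rm pd}(I^{\vee}).
\]
Applying this with $I = I(G)$ gives ${\rm reg}(S/I(G)) = {\rm pd}(S/J(G)) - 1$.

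Next I would convert the projective dimension on the right-hand side into a depth statement using the Auslander--Buchsbaum formula: since $S$ is a polynomial ring in $n$ variables (hence ${\rm depth}(S) = n$), we have
\[
{\rm pd}(S/J(G)) = n - {\rm depth}(S/J(G)).
\]
Combining the two displayed equalities yields ${\rm reg}(S/I(G)) = n - {\rm depth}(S/J(G)) - 1$.

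Now the inequality ${\rm depth}(S/J(G)) \geq n - \nu_{o}(G) - 1$ from Theorem \ref{cover}(ii) translates directly: $n - {\rm depth}(S/J(G)) - 1 \leq n - (n - \nu_{o}(G) - 1) - 1 = \nu_{o}(G)$, which is exactly the claimed bound ${\rm reg}(S/I(G)) \leq \nu_{o}(G)$. So the argument is really just a two-line chain of standard equalities followed by the substitution of the inequality proved in the theorem.

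There is essentially no hard part here: the only content is invoking Terai's duality and Auslander--Buchsbaum correctly, both of which are completely standard. The one point to be mildly careful about is the edge case where $I(G)$ or $J(G)$ degenerates (for instance if $G$ has isolated vertices or no edges at all, in which case $J(G) = S$ and $I(G) = 0$), but in those trivial situations ${\rm reg}(S/I(G)) = 0$ and the inequality holds vacuously since $\nu_{o}(G) \geq 0$; alternatively one can simply note that when $G$ has no edges the statement is immediate, and otherwise the argument above applies verbatim.
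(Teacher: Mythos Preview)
Your proof is correct and follows essentially the same approach as the paper: both combine Theorem \ref{cover}(ii) with the Auslander--Buchsbaum formula to bound the projective dimension of $J(G)$ (equivalently $S/J(G)$), and then invoke Terai's duality theorem to translate this into the regularity bound for $S/I(G)$. You simply spell out the chain of equalities in more detail and note the degenerate edge-free case explicitly.
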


\begin{proof}
It follows from Theorem \ref{cover} and the Auslander-Buchsbaum Formula that the projective dimension of $J(G)$ is at most $\nu_{o}(G)$. Then it follows from Terai's theorem \cite[Theorem 8.1.10]{hh} that ${\rm reg}(S/I(G))\leq \nu_{o}(G)$.
\end{proof}

\begin{rem}
One can give a direct proof for the above corollary, by applying \cite[Corollary 18.7]{p'} on the following exact sequence.$$0\longrightarrow S/(I(G):x_1)\longrightarrow S/I(G)\longrightarrow S/(I(G), x_1)\longrightarrow  0$$However, this proof is essentially the same as given above.
\end{rem}

In \cite{hv}, H${\rm \grave{a}}$ and Van Tuyl proved that the for every graph $G$, the regularity of $S/I(G)$ is less than or equal to the maximum cardinality of matchings of $G$. In fact, it follows
from their proof (and was explicitly stated in \cite{w}) that the ${\rm reg}(S/I(G))$ is at most the minimum cardinality of maximal
matchings of $G$. The following examples show that this bound is not comparable with the bond given in Corollary \ref{reg}.

\begin{exmps}
\begin{enumerate}
\item Let $G=C_4$ be the $4$-cycle-graph. Then one can easily check that $\nu_{o}(G)=1$ and the cardinality of every maximal
matching of $G$ is equal to $2$. Thus, in this example, $\nu_{o}(G)$ is strictly less than the minimum cardinality of maximal
matchings of $G$. We also have ${\rm reg}(S/I(G))=1=\nu_{o}(G)$.
\item Let $G=P_4$ be the path with $4$ vertices. Then one can easily check that $\nu_{o}(G)=2$, while the minimum cardinality of maximal
matchings of $G$ is equal to $1$. Thus, in this example, the minimum cardinality of maximal
matchings of $G$ is strictly less than $\nu_{o}(G)$. We also have ${\rm reg}(S/I(G))=1$ is equal to the minimum cardinality of maximal
matchings of $G$.
\end{enumerate}
\end{exmps}


\section{High Powers} \label{sec3}

The aim of this section is to prove that the high powers of cover ideal of bipartite graphs satisfy the Stanley's inequality. To do this, in Theorem \ref{main}, we provide a lower bound for the Stanley depth of cover ideal of bipartite graphs. Before that, in Lemma \ref{bcolon}, we prove that the different powers of cover ideal of a bipartite graphs, can be obtained from each other by taking colon with respect to a suitable monomial. To prove Lemma \ref{bcolon}, we need to remind the definition of symbolic powers.

\begin{dfn}
Let $I$ be a squarefree monomial ideal in $S$ and suppose that $I$ has the irredundant
primary decomposition $$I=\frak{p}_1\cap\ldots\cap\frak{p}_r,$$ where every
$\frak{p}_i$ is an ideal of $S$ generated by a subset of the variables of
$S$. Let $k$ be a positive integer. The $k$th {\it symbolic power} of $I$,
denoted by $I^{(k)}$, is defined to be $$I^{(k)}=\frak{p}_1^k\cap\ldots\cap
\frak{p}_r^k.$$
\end{dfn}

The proof of the following lemma is based on the fact that the symbolic and the ordinary powers of cover ideal of bipartite graphs coincide.

\begin{lem} \label{bcolon}
Let $G$ be a bipartite graph and assume that $V(G)=U\cup W$ is a bipartition for the vertex set of $G$. Set $u=\prod_{x_i\in U}x_i$. Then for every integer $k\geq 1$, we have $(J(G)^k:u)=J(G)^{k-1}$.
\end{lem}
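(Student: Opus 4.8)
The plan is to exploit the hint the author gives: for a bipartite graph $G$, the ordinary and symbolic powers of the cover ideal coincide, i.e. $J(G)^k = J(G)^{(k)}$ for all $k \geq 1$ (this is a theorem of Herzog--Hibi--Trung, or it follows from the fact that bipartite graphs are exactly those whose edge ideals have the property that all powers agree with symbolic powers; it can be cited from \cite{hh}). So I would first rewrite $J(G)^k = J(G)^{(k)} = \bigcap_{\{x_i,x_j\}\in E(G)} (x_i,x_j)^k$, and similarly $J(G)^{k-1} = \bigcap_{\{x_i,x_j\}\in E(G)} (x_i,x_j)^{k-1}$. Since colon distributes over intersection, namely $\big(\bigcap_\ell Q_\ell : u\big) = \bigcap_\ell (Q_\ell : u)$, the problem reduces to a single edge: I must show that for each edge $\{x_i, x_j\}$, $\big((x_i,x_j)^k : u\big) = (x_i,x_j)^{k-1}$, where $u = \prod_{x_t \in U} x_t$.

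Second, I would carry out this local computation. Fix an edge $e = \{x_i, x_j\}$. Because $V(G) = U \cup W$ is a bipartition and $e$ is an edge, exactly one of $x_i, x_j$ lies in $U$ and the other in $W$; say $x_i \in U$ and $x_j \in W$. Then $u = x_i \cdot u'$ where $u' = \prod_{x_t \in U, t\neq i} x_t$ is a monomial in variables other than $x_i, x_j$. Since $u'$ involves none of the variables appearing in the ideal $(x_i, x_j)$, we have $\big((x_i, x_j)^k : u\big) = \big((x_i,x_j)^k : x_i u'\big) = \big((x_i,x_j)^k : x_i\big)$ — the factor $u'$ contributes nothing because $(x_i,x_j)^k$ is generated in the variables $x_i, x_j$ and hence $(Q : u'v) = (Q:v)$ for such $Q$ when $\gcd(u', \text{anything in }Q)$ behaves trivially; more precisely $(Q : u' ) = Q$ when $u'$ is coprime to a set of monomial generators of $Q$. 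Then $\big((x_i,x_j)^k : x_i\big) = (x_i, x_j)^{k-1}$: indeed $(x_i,x_j)^k = (x_i^k, x_i^{k-1}x_j, \ldots, x_j^k)$, and dividing out $x_i$ (taking colon) gives exactly $(x_i^{k-1}, x_i^{k-2}x_j, \ldots, x_j^{k-1}, x_j^k/\!?)$ — here one checks that $x_j^k : x_i = x_j^k$, but $x_j^k \in (x_i,x_j)^{k-1}$ already, so the colon is precisely $(x_i,x_j)^{k-1}$. Assembling, $\big((x_i,x_j)^k : u\big) = (x_i,x_j)^{k-1}$ for every edge.

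Finally, intersecting over all edges: $\big(J(G)^k : u\big) = \big(J(G)^{(k)} : u\big) = \bigcap_e \big((x_i,x_j)^k : u\big) = \bigcap_e (x_i,x_j)^{k-1} = J(G)^{(k-1)} = J(G)^{k-1}$, where the first and last equalities use the equality of ordinary and symbolic powers in the bipartite case. This proves the lemma. The main obstacle — really the only nontrivial input — is the equality $J(G)^k = J(G)^{(k)}$ for bipartite $G$; everything else is a routine monomial-ideal colon computation edge by edge. One small point to be careful about in the local step is that the bipartition is only \emph{a} bipartition, not necessarily the one with the independence/structure needed elsewhere, but since every edge is split between $U$ and $W$ regardless, the argument that $u$ contains exactly one endpoint of each edge goes through unchanged.
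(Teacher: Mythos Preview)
Your proof is correct and follows essentially the same approach as the paper: use $J(G)^k=J(G)^{(k)}$ for bipartite $G$ (the paper cites \cite{grv} rather than Herzog--Hibi--Trung), distribute the colon over the intersection of the primary components, and then use $|e\cap U|=1$ to get $((x_i,x_j)^k:u)=(x_i,x_j)^{k-1}$ edge by edge. You supply more detail in the local colon computation than the paper does, but the argument is the same.
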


\begin{proof}
It follows from \cite[Corollary 2.6]{grv} that for every integer $k\geq 1$ we have $J(G)^k=J(G)^{(k)}$. On the other hand, for every edge $e=\{x_i, x_j\}$ of $G$, we have $\mid e\cap U\mid=1$. Thus, $((x_i,x_j)^k:u)=(x_i,x_j)^{k-1}$, for every integer $k\geq 1$. Hence$$(J(G)^k:u)=(J(G)^{(k)}:u)=\bigcap_{\{x_i, x_j\}\in E(G)}((x_i,x_j)^k:u)$$
$$=\bigcap_{\{x_i, x_j\}\in E(G)}(x_i,x_j)^{k-1}=J(G)^{(k-1)}=J(G)^{k-1}.$$
\end{proof}

As we mentioned in the the first section, the sequences $\{{\rm sdepth}(J(G)^k)\}_{k=1}^{\infty}$ and $\{{\rm sdepth}(S/J(G)^k)\}_{k=1}^{\infty}$ are convergent. In the following theorem, we provide lower bounds for the limit of theses sequences.

\begin{thm} \label{main}
Let $G$ be a bipartite graph. Then for every integer $k\geq 1$, the inequalities$${\rm sdepth}(J(G)^k)\geq n-\nu_{o}(G) \ \ \ \ \ {\rm and} \ \ \ \ \ {\rm sdepth}(S/J(G)^k)\geq n-\nu_{o}(G)-1$$hold.
\end{thm}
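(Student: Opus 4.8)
The plan is to mimic the inductive structure of Theorem \ref{cover}, but now the induction must carry the whole chain of powers along, so I would do induction on the number of edges of $G$ with the statement phrased for \emph{all} $k\geq 1$ simultaneously. The base case ($G$ a single edge) is exactly as in Theorem \ref{cover}, since $J(G)^k = (x_i,x_j)^k$ is a complete intersection power whose depth and Stanley depth are easy to pin down (and $\nu_o(G)=1$). For the inductive step, fix a non-isolated vertex, say $x_1$; since $x_1$ lies in some bipartition class, I would first reduce to the case $x_1 \in U$ where $u=\prod_{x_i\in U}x_i$. The two ideals I want to split off are $J(G)^k \cap S'$ (where $S'=\mathbb{K}[x_2,\dots,x_n]$) and the colon $(J(G)^k : x_1)$.

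For the colon piece, the key new input is Lemma \ref{bcolon}: iterating the colon by the variables in $U$ recovers lower powers. More precisely, $(J(G)^k : x_1)$ need not itself be a power of a cover ideal, but after colon-ing by the remaining variables of $U$ it becomes $J(G)^{k-1}$ up to the subgraph $G\setminus x_1$; so I would argue that $(J(G)^k : x_1)S$, viewed over $S'$, is obtained from $J(G\setminus x_1)^{k-1}S'$ (possibly twisted by a monomial, as in Lemma \ref{del}) by a sequence of variable colons and restrictions, each of which changes sdepth and depth in a controlled way by \cite[Lemma 3.6]{hvz}. Crucially $G\setminus x_1$ has strictly fewer edges (it loses at least the edge through $x_1$) and is still bipartite, so the induction hypothesis applies to all its powers; combined with $\nu_o(G\setminus x_1)\leq \nu_o(G)$ this gives the desired bound $n-\nu_o(G)$ (resp. $n-\nu_o(G)-1$) for the colon summand. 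The restriction piece $J(G)^k\cap S'$ I would handle exactly as in Theorem \ref{cover}: via Lemma \ref{del} it is, up to a monomial twist, $J(G\setminus N_G[x_1])^k S'$ (using again that ordinary and symbolic powers agree so the intersection $J(G)^k\cap S'$ really is a power of the cover ideal of the deleted graph), and then Lemma \ref{lom} together with \cite[Theorem 1.1]{c1} and \cite[Lemma 3.6]{hvz} and the induction hypothesis gives $\geq n-1-\nu_o(G\setminus N_G[x_1]) \geq n-\nu_o(G)$.

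The main obstacle is the colon step: unlike in Theorem \ref{cover}, $(J(G)^k:x_1)$ is \emph{not} directly a cover-ideal power, so one cannot immediately quote the induction hypothesis. The honest fix is to observe that $(J(G)^k : x_1)$ equals $(J(G\setminus x_1)^k \cdot (\text{contribution of edges at }x_1))$ — more carefully, using $J(G)=J(G\setminus x_1)\cap \bigcap_{x_1y\in E}(x_1,y)$ and symbolic-power machinery, $(J(G)^k:x_1)$ sits between $J(G\setminus x_1)^k$-type ideals, and one shows its sdepth/depth is bounded below by that of $J(G\setminus x_1)^{k}S'$ shifted up by one. I expect one needs the full strength of Lemma \ref{bcolon} applied to $G\setminus x_1$ to identify the relevant ideal as $J(G\setminus x_1)^{k-1}S'$ after removing the rest of $U$, and then to track that only finitely many variable-colon/restriction operations intervene, each preserving the inequality by \cite[Lemma 3.6]{hvz}. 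Once this is set up, the recursion closes and, since the bounds are uniform in $k$, they pass to the limit, yielding the displayed inequalities in the introduction and, with Theorem \ref{cover}(ii) and the known fact that $\operatorname{depth}$ of these modules stabilizes at a value $\leq n-\nu_o(G)-1$ from above (or directly $\leq$ the sdepth bounds just proved), Corollary \ref{sin}.
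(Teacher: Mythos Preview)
Your overall architecture is the right one and matches the paper: split $J(G)^k$ as $(J(G)^k\cap S_1)\oplus x_1(J(G)^k:x_1)$, handle the restriction piece via Lemma \ref{del} (getting $u_1^k J(G\setminus N_G[x_1])^k$), and handle the colon piece by iterating colons through the vertices of one side $U$ of the bipartition and invoking Lemma \ref{bcolon}. The paper does exactly this.

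There is, however, a genuine gap in your induction scheme. You induct on the number of edges only, and then try to close the colon chain by landing on a power of $J(G\setminus x_1)$. But Lemma \ref{bcolon} says $(J(G)^k:\prod_{x_i\in U}x_i)=J(G)^{k-1}$ for the \emph{same} graph $G$, not for $G\setminus x_1$; there is no monomial twist or restriction that turns this into $J(G\setminus x_1)^{k-1}$. Concretely, $(J(G)^k:x_1)=\bigcap_{x_1x_j\in E}(x_1,x_j)^{k-1}\cap\bigcap_{x_1\notin e}(e)^k$ still involves $x_1$ and is not the extension of any power of $J(G\setminus x_1)$, so your proposed identification fails. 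Consequently, an induction on edges alone does not terminate: after peeling off all of $U$ you are back at $J(G)^{k-1}$ with the same edge count.

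The fix is exactly what the paper does: induct on $m+k$ (equivalently, an outer induction on edges and an inner induction on $k$). Then the terminal ideal $J_t'=(J(G)^k:x_1\cdots x_t)=J(G)^{k-1}$ is covered by the hypothesis on smaller $k$. Along the way, each intermediate restriction $J_{i+1}=(J(G)^k:x_1\cdots x_i)\cap S_{i+1}$ must itself be bounded; this is not just a generic ``controlled change'' via \cite[Lemma 3.6]{hvz}, but requires identifying $J(G)^k\cap S_{i+1}$ with $u_{i+1}^k J(G\setminus N_G[x_{i+1}])^k$ (Lemma \ref{del} again) and applying the induction hypothesis on the strictly smaller graph $G\setminus N_G[x_{i+1}]$, together with the colon inequality for Stanley depth. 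That is the content of the ``Claim'' in the paper's proof, and it is where the real work in the colon branch sits.
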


\begin{proof}
Assume that $V(G)=U\cup W$ is a bipartition for the vertex set of $G$. Without loss of generality, we may assume that $U=\{x_1, \ldots, x_t\}$ and $W=\{x_{t+1}, \ldots, x_n\}$, for some integer $t$ with $1\leq t\leq n$. Let $m$ be the number of edges of $G$. We prove the assertions by induction on $m+k$. First, we can assume that $G$ has no isolated vertex. Because deleting the isolated vertices does not change the cover ideal and the ordered matching number of $G$.

For $k=1$, the assertions follow from Theorem \ref{cover}. If $m=1$, then $G$ has two vertices and $\nu_{o}(G)=1$. In this case, the first inequality follows from \cite[Corollary 24]{h} and the second inequality is trivial. Therefore, assume that $k,m\geq 2$. Let $S_1=\mathbb{K}[x_2, \ldots, x_n]$ be the polynomial ring obtained from $S$ by deleting the variable $x_1$ and consider the ideals $J_1=J(G)^k\cap S_1$ and
$J_1'=(J(G)^k:x_1)$.

Now $J(G)^k=J_1\oplus x_1J_1'$ and $S/J(G)^k=(S_1/J_1)\oplus
x_1(S/J_1')$ (as vector spaces) and therefore by definition of  the Stanley depth we have
\[
\begin{array}{rl}
{\rm sdepth}(J(G)^k)\geq \min \{{\rm sdepth}_{S_1}(J_1), {\rm sdepth}_S(J_1')\},
\end{array} \tag{\dag} \label{dag}
\]
and
\[
\begin{array}{rl}
{\rm sdepth}(S/J(G)^k)\geq \min \{{\rm sdepth}_{S_1}(S_1/J_1), {\rm sdepth}_S(S/J_1')\}.
\end{array} \tag{\ddag} \label{ddag}
\]

Notice that $J_1=(J(G)\cap S_1)^k$. Hence, by Lemma \ref{del} we conclude that there exists a monomial $u_1\in S_1$ such that $J_1=u_1^kJ(G\setminus N_G[x_1])^kS_1$. It follows from \cite[Theorem 1.1]{c1} that ${\rm sdepth}_{S_1}(J_1)={\rm sdepth}_{S_1}(J(G\setminus N_G[x_1])^kS_1)$ and ${\rm sdepth}_{S_1}(S_1/J_1S_1)={\rm sdepth}_{S_1}(S_1/J(G\setminus N_G[x_1])^kS_1)$. Therefore, by \cite[Lemma 3.6]{hvz}, Lemma \ref{lom} and
the induction hypothesis, we conclude that
$${\rm sdepth}_{S_1}(J_1)={\rm sdepth}_{S_1}(J(G\setminus N_G[x_1])^kS_1)\geq n-1-\nu_{o}(G\setminus N_G[x_1])\geq n-\nu_{o}(G),$$
and similarly ${\rm sdepth}_{S_1}(S_1/J_1)\geq n-\nu_{o}(G)-1$. Thus, using the inequalities (\ref{dag}) and (\ref{ddag}), it is enough to prove that ${\rm sdepth}_S(J_1')\geq n-\nu_{o}(G)$ and ${\rm sdepth}_S(S/J_1')\geq n-\nu_{o}(G)-1$.

For every integer $i$ with $2\leq i\leq t$, let $S_i=\mathbb{K}[x_1, \ldots, x_{i-1}, x_{i+1}, \ldots, x_n]$ be the polynomial ring obtained from $S$ by deleting the variable $x_i$ and consider the ideals $J_i'=(J_{i-1}':x_i)$ and $J_i=J_{i-1}'\cap S_i$.

{\bf Claim.} For every integer $i$ with $1\leq i\leq t-1$ we have$${\rm sdepth}(J_i')\geq \min \{n-\nu_{o}(G), {\rm sdepth}(J_{i+1}')\}$$ and $${\rm sdepth}(S/J_i')\geq \min\{n-\nu_{o}(G)-1, {\rm sdepth}(S/J_{i+1}')\}.$$

{\it Proof of the Claim.} For every integer $i$ with $1\leq i\leq t-1$, we have $J_i'=J_{i+1}\oplus x_{i+1}J_{i+1}'$ and $S/J_i'=(S_{i+1}/J_{i+1})\oplus
x_{i+1}(S/J_{i+1}')$ (as vector spaces) and therefore by definition of  the Stanley depth we have
\[
\begin{array}{rl}
{\rm sdepth}(J_i')\geq \min \{{\rm sdepth}_{S_{i+1}}(J_{i+1}), {\rm sdepth}_S(J_{i+1}')\},
\end{array} \tag{$\ast$} \label{ast}
\]
and
\[
\begin{array}{rl}
{\rm sdepth}(S/J_i')\geq \min \{{\rm sdepth}_{S_{i+1}}(S_{i+1}/J_{i+1}), {\rm sdepth}_S(S/J_{i+1}')\}.
\end{array} \tag{$\ast\ast$} \label{aast}
\]

Notice that for every integer $i$ with $1\leq i\leq t-1$, we have $J_i'=(J(G)^k:x_1x_2\ldots x_i)$. Thus $J_{i+1}=J_i'\cap S_{i+1}=((J(G)^k\cap S_{i+1}):_{S_{i+1}}x_1x_2\ldots x_i)$. Hence, it follows from \cite[Proposition 2]{p} and \cite[Proposition 2.7]{c} (see also \cite[Proposition 2.5]{s3}) that

\[
\begin{array}{rl}
{\rm sdepth}_{S_{i+1}}(J_{i+1})\geq {\rm sdepth}_{S_{i+1}}(J(G)^k\cap S_{i+1}).
\end{array} \tag{$\ast\ast\ast$} \label{aaast}
\]

and

\[
\begin{array}{rl}
{\rm sdepth}_{S_{i+1}}(S_{i+1}/J_{i+1})\geq {\rm sdepth}_{S_{i+1}}(S_{i+1}/(J(G)^k\cap S_{i+1})).
\end{array} \tag{$\ast\ast\ast\ast$} \label{aaaast}
\]

By Lemma \ref{del} we conclude that there exists a monomial $u_{i+1}\in S_{i+1}$ such that $J(G)\cap S_{i+1}=u_{i+1}J(G\setminus N_G[x_{i+1}])S_{i+1}$. Therefore$$J(G)^k\cap S_{i+1}=u_{i+1}^kJ(G\setminus N_G[x_{i+1}])^kS_{i+1}$$ and it follows from \cite[Theorem 1.1]{c1} that$${\rm sdepth}_{S_{i+1}}(J(G)^k\cap S_{i+1})={\rm sdepth}_{S_{i+1}}(J(G\setminus N_G[x_{i+1}])^kS_{i+1})$$and$${\rm sdepth}_{S_{i+1}}(S_{i+1}/(J(G)^k\cap S_{i+1}))={\rm sdepth}_{S_{i+1}}(S_{i+1}/J(G\setminus N_G[x_{i+1}])^kS_{i+1}).$$ Therefore by \cite[Lemma 3.6]{hvz}, Lemma \ref{lom} and
the induction hypothesis we conclude that
$${\rm sdepth}_{S_{i+1}}(J(G)^k\cap S_{i+1})\geq n-1-\nu_{o}(G\setminus N_G[x_{i+1}])\geq n-\nu_{o}(G),$$
and similarly ${\rm sdepth}_{S_{i+1}}(S_{i+1}/(J(G)^k\cap S_{i+1}))\geq n-\nu_{o}(G)-1$. Now the claim follows by inequalities (\ref{ast}), (\ref{aast}), (\ref{aaast}) and (\ref{aaaast}).

Now, $J_t'=(J(G)^k:x_1x_2\ldots x_t)$ and hence, Lemma \ref{bcolon} implies that $J_t'=J(G)^{k-1}$ and thus, by induction hypothesis we conclude that ${\rm sdepth}(J_t')\geq n-\nu_{o}(G)$ and ${\rm sdepth}(S/J_t')\geq n-\nu_{o}(G)-1$. Therefore, using the claim repeatedly implies that ${\rm sdepth}(J_1')\geq n-\nu_{o}(G)$ and ${\rm sdepth}(S/J_1')\geq n-\nu_{o}(G)-1$. This completes the proof of the theorem.
\end{proof}

Let $I\subset S$ be a monomial ideal. A classical result by Burch \cite{b'} states
that $$\min_k{\rm depth}(S/I^k)\leq n-\ell(I),$$ where $\ell(I)$ is the
analytic spread of $I$, that is, the dimension of $\mathcal{R}(I)/
{{\frak{m}}\mathcal{R}(I)}$, where $\mathcal{R}(I)=\bigoplus_
{n=0}^{\infty}I^n= S[It] \subseteq S[t]$ is the Rees ring of $I$ and
$\frak{m}=(x_1,\ldots,x_n)$ is the maximal ideal of $S$. By a theorem of
Brodmann \cite{b}, ${\rm depth}(S/I^k)$ is constant for large $k$. We call
this constant value the {\it limit depth} of $I$, and denote it by
$\lim_{k\rightarrow \infty}{\rm depth}(S/I^k)$. Brodmann improved the Burch's
inequality by showing that

\[
\begin{array}{rl}
\lim_{k\rightarrow \infty}{\rm depth}(S/I^k) \leq n-\ell(I).
\end{array} \tag{$\sharp$} \label{sharp}
\]

Let $I\subset S$ be an arbitrary ideal. An element $f \in S$ is
{\it integral} over $I$, if there exists an equation
$$f^k + c_1f^{k-1}+ \ldots + c_{k-1}f + c_k = 0 {\rm \ \ \ \ with} \ c_i\in I^i.$$
The set of elements $\overline{I}$ in $S$ which are integral over $I$ is the {\it integral closure}
of $I$. The ideal $I$ is {\it integrally closed}, if $I = \overline{I}$.

It is known that the equality holds, in inequality (\ref{sharp}), if $I$ is a normal ideal. By \cite[Corollary 2.6]{grv} and \cite[Theorem 1.4.6]{hh}, we know that $J(G)$ is a normal ideal, for every bipartite graph $G$. Also, it follows from \cite[Theorem 2.8]{cv} that for every bipartite graph $G$, we have $\ell(J(G))=\nu_{o}(G)+1$. Thus, we conclude that$$\lim_{k\to\infty}{\rm
depth}(S/J(G)^k)=n-1-\nu_{o}(G).$$(This equality is explicitly stated in \cite[Theorem 4.5]{cpsty}.) Therefore, Theorem \ref{main} implies the following result

\begin{cor} \label{sin}
Let $G$ be a bipartite graph and $J(G)$ be its edge ideal. Then there exists an integer $n_0\geq 1$ such that $J(G)^k$ and $S/J(G)^k$ satisfy the Stanley's inequality, for every integer $k\geq n_0$.
\end{cor}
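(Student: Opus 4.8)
The plan is to combine Theorem \ref{main} with the limit--depth computation recorded in the paragraph preceding the corollary. First I would invoke Brodmann's theorem: the function $k\mapsto {\rm depth}(S/J(G)^k)$ is eventually constant, so there is an integer $n_0\geq 1$ with ${\rm depth}(S/J(G)^k)=\lim_{k\to\infty}{\rm depth}(S/J(G)^k)$ for all $k\geq n_0$. Since $J(G)$ is normal for a bipartite graph $G$ (by \cite[Corollary 2.6]{grv} together with \cite[Theorem 1.4.6]{hh}), equality holds in Brodmann's inequality $(\sharp)$, and combining this with $\ell(J(G))=\nu_{o}(G)+1$ (from \cite[Theorem 2.8]{cv}) shows that the common value is $n-\nu_{o}(G)-1$. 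Hence ${\rm depth}(S/J(G)^k)=n-\nu_{o}(G)-1$ for every $k\geq n_0$.

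Next I would treat $S/J(G)^k$ directly: Theorem \ref{main} gives ${\rm sdepth}(S/J(G)^k)\geq n-\nu_{o}(G)-1={\rm depth}(S/J(G)^k)$ for every $k\geq n_0$, which is precisely Stanley's inequality for $S/J(G)^k$.

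For the ideal $J(G)^k$ itself I would first pass from the depth of $S/J(G)^k$ to that of $J(G)^k$ via the short exact sequence $0\to J(G)^k\to S\to S/J(G)^k\to 0$. We may assume $G$ has at least one edge (otherwise $J(G)=S$ and everything is trivial), so $\nu_{o}(G)\geq 1$ and $J(G)^k$ is a proper nonzero ideal; then ${\rm depth}(S)=n>n-\nu_{o}(G)-1={\rm depth}(S/J(G)^k)$ for $k\geq n_0$, and the depth lemma forces ${\rm depth}(J(G)^k)={\rm depth}(S/J(G)^k)+1=n-\nu_{o}(G)$. Applying Theorem \ref{main} once more yields ${\rm sdepth}(J(G)^k)\geq n-\nu_{o}(G)={\rm depth}(J(G)^k)$ for all $k\geq n_0$, which is Stanley's inequality for $J(G)^k$. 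Choosing the same $n_0$ for both modules finishes the argument.

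I do not expect a genuine obstacle: the substantive content is already contained in Theorem \ref{main} and in the normality and analytic-spread facts quoted from \cite{grv}, \cite{hh} and \cite{cv}, all of which may be assumed. The only points requiring a moment's care are the identity ${\rm depth}(J(G)^k)={\rm depth}(S/J(G)^k)+1$, which is the routine consequence of the depth lemma once one notes ${\rm depth}(S/J(G)^k)<{\rm depth}(S)$, and the harmless exclusion of the edgeless graph.
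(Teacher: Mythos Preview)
Your proposal is correct and follows essentially the same route as the paper: the paper records the limit--depth identity $\lim_{k\to\infty}{\rm depth}(S/J(G)^k)=n-\nu_{o}(G)-1$ in the paragraph preceding the corollary (via normality of $J(G)$ and $\ell(J(G))=\nu_{o}(G)+1$) and then simply says that Theorem~\ref{main} implies the result. You have merely made explicit the passage from ${\rm depth}(S/J(G)^k)$ to ${\rm depth}(J(G)^k)$ via the depth lemma, which the paper leaves implicit.
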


In \cite{s1}, the author proposed the following conjecture regarding the Stanley depth of integrally closed monomial ideals.

\begin{conj} \label{conje}
{\rm (}\cite[Conjecture 2.6]{s1}{\rm )} Let $I\subset S$ be an integrally closed monomial ideal. Then ${\rm sdepth}(S/I)\geq n-\ell(I)$ and ${\rm sdepth} (I)\geq n-\ell(I)+1$.
\end{conj}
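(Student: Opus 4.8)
The plan is to attempt Conjecture \ref{conje} by induction on the number of variables $n$, running for fixed $n$ a secondary induction modelled on the variable decompositions used in the proofs of Theorems \ref{cover} and \ref{main}. I identify a monomial ideal $K\subset S$ with the set $E(K)\subseteq\mathbb{Z}^n_{\geq 0}$ of exponent vectors of the monomials it contains, and I use the standard description of integral closure: $K$ is integrally closed if and only if $E(K)=Q\cap\mathbb{Z}^n_{\geq 0}$ for some rational polyhedron $Q$ with $Q+\mathbb{R}^n_{\geq 0}=Q$ (one direction takes $Q$ to be the Newton polyhedron $\operatorname{NP}(K)=\operatorname{conv}(E(K))+\mathbb{R}^n_{\geq 0}$; the other holds because, given such a $Q$, the set $Q\cap\mathbb{R}^n_{\geq 0}$ is convex and upward closed, hence already contains $\operatorname{conv}(E(K))+\mathbb{R}^n_{\geq 0}$).

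The first observation is that the class of integrally closed monomial ideals is closed under the two operations occurring in the decomposition with respect to a variable $x_n$: writing $S'=\mathbb{K}[x_1,\dots,x_{n-1}]$, both $I\cap S'$ and $(I:x_n)$ are again integrally closed, since $E(I\cap S')=E(I)\cap\{a_n=0\}$ and $E(I:x_n)=\bigl(\operatorname{NP}(I)-e_n\bigr)\cap\mathbb{Z}^n_{\geq 0}$, and in each case the right-hand side has the form demanded by the criterion above (with recession cone the relevant positive orthant). Granting this, one applies the Stanley-decomposition gluing used repeatedly in this paper: from $I=(I\cap S')\oplus x_n(I:x_n)$ and $S/I=\bigl(S'/(I\cap S')\bigr)\oplus x_n\bigl(S/(I:x_n)\bigr)$ (as $\mathbb{K}$-vector spaces) one gets ${\rm sdepth}(I)\geq\min\{{\rm sdepth}_{S'}(I\cap S'),\,{\rm sdepth}_S(I:x_n)\}$ and the analogous bound for $S/I$.

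For fixed $n$ I would then induct on $\varepsilon(I)$, the largest power of $x_n$ dividing a minimal generator of $I$. If $\varepsilon(I)=0$, then $I=(I\cap S')S$, so $\ell(I)=\ell(I\cap S')$, and by \cite[Lemma 3.6]{hvz} together with the induction on $n$ applied to $I\cap S'$ both required inequalities for $I$ (in the ideal and in the quotient form) follow, exactly as the $S'$-terms are treated in the proof of Theorem \ref{cover}. If $\varepsilon(I)\geq 1$, then $\varepsilon(I:x_n)=\varepsilon(I)-1$, so the colon term is handled by the secondary induction once one knows $\ell(I:x_n)\leq\ell(I)$, while the restriction term is handled by the induction on $n$ once one knows $\ell(I\cap S')\leq\ell(I)-1$; feeding these into the two $\min$-bounds closes the argument. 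The secondary induction is well founded because after finitely many colons by $x_n$ one reaches the $x_n$-saturation of $I$, all of whose minimal generators lie in $S'$, which is the case $\varepsilon=0$.

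The hard part will be the bookkeeping for analytic spread, and in particular the inequality $\ell(I\cap S')\leq\ell(I)-1$ when $x_n$ occurs in a minimal generator. This plays the exact role that the strict drop $\nu_o(G\setminus N_G[x])\leq\nu_o(G)-1$ of Lemma \ref{lom} plays here---note $\ell(J(G))=\nu_o(G)+1$ for bipartite $G$---but, in contrast to the situation of Theorem \ref{main}, $I\cap S'$ is not identified with a smaller ideal of the same kind, so one must argue it directly; the natural route is to compare the special fibre rings $F(I\cap S')$ and $F(I)$ and to show that the class of a minimal generator divisible by $x_n$ is transcendental over $\mathbb{K}[\,\overline m:m\text{ a minimal generator of }I\text{ lying in }S'\,]$, hence contributes an extra dimension. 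The companion inequality $\ell(I:x_n)\leq\ell(I)$ should be less delicate, since passing to a colon only enlarges, and so simplifies, the ideal. Finally, as a consistency check: Theorem \ref{main} already proves Conjecture \ref{conje} for $I=J(G)^k$ with $G$ bipartite, because such ideals are normal (hence integrally closed) and $\ell(J(G)^k)=\ell(J(G))=\nu_o(G)+1$, so the conjectured bounds read precisely ${\rm sdepth}(J(G)^k)\geq n-\nu_o(G)$ and ${\rm sdepth}(S/J(G)^k)\geq n-\nu_o(G)-1$.
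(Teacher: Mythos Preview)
This statement is recorded in the paper as an open conjecture (quoted from \cite{s1}); the paper does not prove it. The only thing the paper establishes toward Conjecture~\ref{conje} is the special case $I=J(G)^k$ with $G$ bipartite: these ideals are integrally closed because $J(G)$ is normal, one has $\ell(J(G)^k)=\ell(J(G))=\nu_o(G)+1$, and then Theorem~\ref{main} gives exactly the conjectured bounds. Your final paragraph reproduces this observation correctly, and that is the full extent of the paper's contribution here; there is no general proof to compare against.

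Your inductive scheme for the general conjecture is coherent, and the stability of integral closedness under $I\mapsto I\cap S'$ and $I\mapsto(I:x_n)$ is correct. The genuine gap---which you flag yourself---lies in the two analytic-spread inequalities, and neither is established. For $\ell(I\cap S')\leq\ell(I)-1$ you offer only an analogy with Lemma~\ref{lom} and a sketch of a transcendence argument; but the minimal generators of $I$ lying in $S'$ need not generate $I\cap S'$ (hence need not present $F(I\cap S')$), so comparing $F(I\cap S')$ with a subalgebra of $F(I)$ is not straightforward, and exhibiting a single transcendental element over that subalgebra does not by itself bound $\dim F(I\cap S')$ from above. For $\ell(I:x_n)\leq\ell(I)$ your heuristic that colons ``only enlarge, and so simplify'' the ideal is misleading: inclusions of ideals do not yield inequalities of analytic spread in either direction (for instance $(x^2)\subset(x,y)$ in $\mathbb{K}[x,y]$ has $\ell$ jumping from $1$ to $2$), so enlargement alone is no argument. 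Without both inequalities the induction does not close; these are precisely the points where a proof of the general conjecture would have to do real work, and the paper makes no claim to have done so.
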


Let $G$ be a bipartite graph. As we mentioned above $J(G)$ is a normal ideal. Thus, every power of $J(G)$ is integrally closed. Therefore, Theorem \ref{main} shows that Conjecture \ref{conje} is true for the powers of cover ideal of bipartite graphs.





\end{document}